\documentclass[11pt]{article}
\usepackage[margin=1in]{geometry}
\usepackage{amsmath,amssymb,amsthm}
\usepackage[dvipsnames]{xcolor}
\usepackage[colorlinks=true,linkcolor=MidnightBlue,citecolor=MidnightBlue,
            urlcolor=MidnightBlue]{hyperref}
\usepackage{url}

\newtheorem{theorem}{Theorem}
\newtheorem{corollary}{Corollary}
\newtheorem{conjecture}{Conjecture}
\theoremstyle{remark}
\newtheorem{remark}{Remark}

\newcommand{\degseq}{d_1 \le d_2 \le \cdots \le d_n}

\title{An annihilation-number Caro-Wei bound:\\
       a TxGraffiti conjecture and an independence-number bracket}
\author{Chakshu Gupta\\[2pt]
  {\small College of Computing, Georgia Institute of Technology}\\
  {\small \texttt{cgupta65@gatech.edu}}}
\date{}

\begin{document}
\maketitle

\begin{abstract}
Automated conjecturing programs scan collections of graphs for inequalities
between invariants that no stored graph violates, then offer the survivors for
proof or refutation. TxGraffiti, one such program, conjectured that every
nontrivial connected graph $G$ satisfies
$\alpha(G) \ge \bigl(a(G) + R(G)\bigr)/\Delta(G)$, where $\alpha$ is the
independence number, $a$ the annihilation number, $R$ the residue, and
$\Delta$ the maximum degree. Established only for two special families of
graphs, the conjecture has otherwise remained open. The note proves the
degree-sequence inequality $a \le \tfrac{\Delta+1}{2}W$, where $W$ is the
Caro-Wei sum; the same inequality is known for the independence number in
place of $a$. Combined with the classical lower bounds $\alpha \ge R$ and
$\alpha \ge W$, it proves the conjecture for every connected graph of maximum
degree at least three, and a direct argument settles maximum degree two; the
conjecture fails only for the single edge, of maximum degree one. The
inequality also brackets the independence number between the polynomial-time
quantities $R$ and $a$, within a factor $(\Delta+1)/2$. The
conjecture's bound is sharp, with equality attained, for instance, by the
complete graph on four vertices.
\end{abstract}

\section{Introduction}\label{sec:intro}

The Graffiti system~\cite{Fajtlowicz1988graffiti} makes conjectures by searching
a database of graphs for an inequality between invariants that no stored graph
violates, then offering the surviving relation as a conjecture. Its successor
TxGraffiti is the source of the following.

\begin{conjecture}[\cite{Davila2025reverie}]\label{conj:c1}
If $G$ is a nontrivial connected graph, then
$\alpha(G) \ge \bigl(a(G) + R(G)\bigr)/\Delta(G)$, and this bound is sharp.
\end{conjecture}

All graphs considered are finite, simple, and undirected. For a graph $G$ on $n$
vertices with $m$ edges and degree sequence $\degseq$ in nondecreasing order,
$\alpha(G)$ is the independence number; $\Delta(G) = d_n$ the maximum degree;
$a(G)$ the annihilation number, the largest integer $j$ such that
$d_1 + \cdots + d_j \le m$; and $R(G)$ the residue, the number of zeros
remaining when the Havel-Hakimi process~\cite{Havel1955, Hakimi1962} is iterated on the degree sequence until
it terminates. The independence number is NP-hard to compute~\cite{Karp1972},
whereas $a$ and $R$ are polynomial-time computable from the degree sequence.

The annihilation number is an upper bound on $\alpha$~\cite{Pepper2009}, while
the residue~\cite{Favaron1991} and the Caro-Wei sum $W$~\cite{Caro1979, Wei1981}
are lower bounds; $W$ sums $1/(d+1)$ over all vertices. The conjecture is open in
general, established only for regular bipartite and cubic K\"onig-Egerv\'ary
graphs~\cite{Davila2025reverie}. Existing bounds estimate $\alpha$
itself: the Degree Sequence Index Strategy bounds it from above
by annihilation-type quantities~\cite{CaroPepper2014}, and the Caro-Wei sum
estimates it within the best-possible factor $(\Delta+1)/2$, that is,
$\alpha \le \tfrac{\Delta+1}{2}W$~\cite{Boppana2018}. Proving the conjecture,
however, requires a bound on the annihilation number itself, which prior
estimates of $\alpha$ do not supply.

Theorem~\ref{thm:vehicle} provides such a bound: $a \le \tfrac{\Delta+1}{2}W$,
obtained by applying the argument of~\cite{Boppana2018} to the annihilation head
in place of a maximum independent set. Combined with the classical lower bounds,
it proves the conjecture for every connected graph of maximum degree at least
two (Theorem~\ref{thm:main}); the single edge, of maximum degree one, is the sole
exception, where the conjecture fails. The conjectured bound $(a+R)/\Delta$ is
sharp, attained for instance at the complete graph $K_4$
(Corollary~\ref{cor:sharp}), though for $\Delta \ge 3$, it is dominated by
$R$ (Corollary~\ref{cor:dominated}). The same inequality also brackets
the independence number between the polynomial-time quantities $R$ and
$a$, which differ by at most a factor $(\Delta+1)/2$ (Corollary~\ref{cor:approx}).

\section{Results}\label{sec:results}

For the nondecreasing degree sequence $\degseq$, the Caro-Wei sum is
$W = \sum_{i=1}^{n} \tfrac{1}{d_i+1}$, the index $i$ running over the
$n$ vertices. Three classical bounds on $\alpha$ are used throughout this
section: Favaron's residue bound $\alpha \ge R$~\cite[Thm.~3.3]{Favaron1991}, the
Caro-Wei bound $\alpha \ge W$~\cite{Caro1979, Wei1981}, and Pepper's
annihilation bound $\alpha \le a$~\cite{Pepper2009}. The residue dominates
the Caro-Wei sum, $R \ge W$, by Theorem~1.1 of~\cite{Favaron1991}.

\begin{theorem}\label{thm:vehicle}
Every graphic degree sequence with maximum degree $\Delta \ge 1$ satisfies
\[
  a \ \le\ \frac{\Delta+1}{2}\, W.
\]
\end{theorem}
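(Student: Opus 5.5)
The plan is to prove the equivalent inequality $W \ge 2a/(\Delta+1)$ directly from the degree sequence. Following the idea of~\cite{Boppana2018}, I will replace the maximum independent set by the \emph{annihilation head}, the first $j=a$ terms $d_1,\dots,d_j$ of the nondecreasing sequence. The one structural fact available is the definition of $a$:
\[
  S_{\mathrm{head}} := \sum_{i\le j} d_i \ \le\ m = \tfrac12\sum_{i=1}^n d_i .
\]
This is equivalent to $S_{\mathrm{head}} \le S_{\mathrm{tail}} := \sum_{i>j} d_i$, and it plays the role that ``edges leaving $I$ are at most the edges entering $V\setminus I$'' plays in Boppana's argument.

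Since every degree satisfies $0\le d_i\le\Delta$, I will split $W$ into head and tail and bound each summand $1/(d+1)$ from below by a linear function of $d$. The two lines are chosen so that the $d$-terms cancel against $S_{\mathrm{head}}\le S_{\mathrm{tail}}$.

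For a tail vertex I use
\[
  \frac{1}{d+1}\ \ge\ \frac{d}{\Delta(\Delta+1)},
\]
which holds because $d(d+1)\le\Delta(\Delta+1)$. For a head vertex I use
\[
  \frac{1}{d+1}\ \ge\ \frac{2\Delta-d}{\Delta(\Delta+1)}.
\]
After clearing denominators this is $\Delta(\Delta+1)\ge (d+1)(2\Delta-d)$, which rearranges to $(\Delta-d)(\Delta-d-1)\ge 0$. That holds for every integer $d$, since $\Delta-d$ and $\Delta-d-1$ are consecutive integers and so are never of opposite strict sign.

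This head inequality is the step I expect to be the crux. By convexity a line below $1/(d+1)$ cannot in general be tight at both ends, so the bound genuinely uses the integrality of degrees. The line is tight at $d=\Delta$ and $d=\Delta-1$, not at $d=0$.

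Summing the two bounds gives
\[
  W\ \ge\ \frac{2j}{\Delta+1}\;-\;\frac{S_{\mathrm{head}}}{\Delta(\Delta+1)}\;+\;\frac{S_{\mathrm{tail}}}{\Delta(\Delta+1)}\ \ge\ \frac{2a}{\Delta+1},
\]
where the last step uses $S_{\mathrm{tail}}\ge S_{\mathrm{head}}$. This yields $a\le\frac{\Delta+1}{2}W$. The hypothesis $\Delta\ge1$ is needed only to make the denominators $\Delta(\Delta+1)$ legitimate. Graphicity enters only through the identity $2m=\sum_i d_i$, and no sorting beyond the definition of the head is needed.
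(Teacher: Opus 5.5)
Your proof is correct and is essentially the paper's proof. The head bound $1/(d+1) \ge (2\Delta-d)/(\Delta(\Delta+1))$ via $(\Delta-d)(\Delta-d-1)\ge 0$ is exactly the paper's pointwise inequality, and the only cosmetic difference is on the tail: the paper uses $1/(d_i+1)\ge 1/(\Delta+1)$ and then $\sum_{i\le a} d_i \le \sum_{i>a} d_i \le \Delta(n-a)$, whereas your bound $1/(d+1)\ge d/(\Delta(\Delta+1))$ folds these two steps into one.
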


\begin{proof}
Let $H = \{1, \ldots, a\}$ index the $a$ smallest degrees and
$T = \{a+1, \ldots, n\}$ the remaining $n-a$; call these the head and the tail.

\medskip\noindent\emph{Head degree-sum bound.}
The annihilation number $a$ is the largest $j$ with $d_1 + \cdots + d_j \le m$, so
the head satisfies $\sum_{i \in H} d_i \le m$. Each edge contributes $2$ to the
degree sum, hence $\sum_{i=1}^{n} d_i = 2m$; as the head and tail partition the
indices,
\[
  \sum_{i \in T} d_i \ =\ 2m - \sum_{i \in H} d_i \ \ge\ 2m - m \ =\ m
    \ \ge\ \sum_{i \in H} d_i .
\]
Every degree is at most $\Delta = d_n$, so the $n-a$ tail terms sum to at most
$\Delta(n-a)$. Chaining this with the previous line,
\begin{equation}\label{eq:headsum}
  \sum_{i \in H} d_i \ \le\ \sum_{i \in T} d_i \ \le\ \Delta\,(n-a).
\end{equation}

\medskip\noindent\emph{Pointwise inequality.}
For every integer $k$ with $0 \le k \le \Delta$,
\begin{equation}\label{eq:pointwise}
  \frac{1}{k+1} + \frac{k}{\Delta(\Delta+1)} \ \ge\ \frac{2}{\Delta+1}.
\end{equation}
Multiplying through by the positive number $\Delta(\Delta+1)(k+1)$ and collecting
terms,~\eqref{eq:pointwise} is equivalent to
\[
  \Delta(\Delta+1) + k(k+1) - 2\Delta(k+1) \ \ge\ 0,
\]
whose left-hand side equals $(\Delta-k)(\Delta-k-1)$. Since $0 \le k \le \Delta$,
the integer $\Delta - k$ is nonnegative: if $\Delta - k = 0$, the product is
$0 \cdot (-1) = 0$; if $\Delta - k \ge 1$, then $\Delta - k - 1 \ge 0$ and the
product is the product of two nonnegative integers. In either case it is
nonnegative, with equality precisely when one factor vanishes, that is, at
$k = \Delta$ and $k = \Delta - 1$.

\medskip\noindent\emph{Summation.}
Split $W$ over the head and tail. On the tail every degree is at most $\Delta$, so
$\tfrac{1}{d_i+1} \ge \tfrac{1}{\Delta+1}$, and the $n-a$ tail terms contribute at
least $(n-a)/(\Delta+1)$. On the head apply~\eqref{eq:pointwise} with $k = d_i$,
legitimate as each $d_i \le \Delta$:
\[
  W \ \ge\ \sum_{i \in H}\!\left(\frac{2}{\Delta+1} - \frac{d_i}{\Delta(\Delta+1)}\right)
        + \frac{n-a}{\Delta+1}
    \ =\ \frac{2a}{\Delta+1}
        - \frac{\sum_{i \in H} d_i}{\Delta(\Delta+1)}
        + \frac{n-a}{\Delta+1}.
\]
By~\eqref{eq:headsum}, $\sum_{i \in H} d_i \le \Delta(n-a)$, so the middle term is
at least $-(n-a)/(\Delta+1)$; it cancels the tail contribution, leaving
$W \ge 2a/(\Delta+1)$. Rearranging gives $a \le \tfrac{\Delta+1}{2}W$.
\end{proof}

The hypothesis $\Delta \ge 1$ is necessary. With $\Delta = 0$, the edgeless
sequence on $n$ vertices has $a = n$ and $W = n$, so $\tfrac{\Delta+1}{2}W = n/2
< a$. The constant $(\Delta+1)/2$ is best possible. For every $\Delta \ge 1$,
any $\Delta$-regular sequence of even order $n$ has $a = n/2$ and
$W = n/(\Delta+1)$, so $a = \tfrac{\Delta+1}{2}W$.

\begin{theorem}[Resolution of Conjecture~\ref{conj:c1}]\label{thm:main}
Every connected graph $G$ with $\Delta(G) \ge 2$ satisfies
\[
  \Delta(G)\,\alpha(G) \ \ge\ a(G) + R(G).
\]
\end{theorem}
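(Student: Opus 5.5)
For $\Delta \ge 3$, I would combine Theorem~\ref{thm:vehicle} with the classical lower bounds. Theorem~\ref{thm:vehicle} together with the Caro-Wei bound $\alpha \ge W$ gives
\[
a \ \le\ \frac{\Delta+1}{2}\,W \ \le\ \frac{\Delta+1}{2}\,\alpha ,
\]
and Favaron's bound gives $R \le \alpha$. Adding these,
\[
a + R \ \le\ \frac{\Delta+3}{2}\,\alpha \ \le\ \Delta\,\alpha ,
\]
where the last step is exactly $\Delta+3 \le 2\Delta$, i.e.\ $\Delta \ge 3$. This case is routine.

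The case $\Delta = 2$ is where the inequality $(\Delta+3)/2 \le \Delta$ fails, so I would argue directly. A connected graph with $\Delta = 2$ is either a path $P_n$ with $n \ge 3$ or a cycle $C_n$ with $n \ge 3$, and the goal is $a + R \le 2\alpha$. Since $\alpha \ge R$, it suffices to show $a \le \alpha$; by Pepper's bound this amounts to $a = \alpha$. I would compute everything explicitly.

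\emph{Cycle $C_n$.} All degrees are $2$ and $m = n$, so $a = \lfloor n/2 \rfloor = \alpha$.

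\emph{Path $P_n$.} The degrees are $1,1,2,\ldots,2$ and $m = n-1$. The condition $2 + 2(j-2) \le n-1$ gives $j \le (n+1)/2$, so $a = \lfloor (n+1)/2 \rfloor = \lceil n/2 \rceil = \alpha(P_n)$.

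In both cases $a = \alpha$, hence $a + R \le \alpha + \alpha = 2\alpha$. So no residue computation is needed; only $R \le \alpha$ is used.

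The main obstacle I anticipate is only bookkeeping. The annihilation computation for the path must handle small $n$ correctly; for example, $P_3$ has degrees $1,1,2$ with $m = 2$, giving $a = 2 = \alpha$, which agrees with the formula. I would also remark that $\Delta = 1$ (the single edge, with $a = R = \alpha = 1$) gives $1 \ge 2$, which is false, consistent with the stated exception.
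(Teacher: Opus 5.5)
Your proposal is correct and follows the paper's proof. For $\Delta \ge 3$ you combine Theorem~\ref{thm:vehicle}, $\alpha \ge W$ and $\alpha \ge R$, where your condition $(\Delta+3)/2 \le \Delta$ is equivalent to the paper's $(\Delta+1)/2 \le \Delta-1$; for $\Delta = 2$ you reduce to $a = \alpha$ on paths and cycles plus $\alpha \ge R$, and your explicit values $a(C_n) = \lfloor n/2 \rfloor$ and $a(P_n) = \lceil n/2 \rceil$ fill in the ``direct calculation'' the paper leaves unstated.
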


\begin{proof}
The hypothesis $\Delta \ge 2$ excludes $K_2$, the only connected graph with
maximum degree $1$, for which $\Delta\,\alpha = 1 < a + R = 2$.
If $\Delta = 2$, then $G$ is a path or a cycle, and in both cases $a = \alpha$
by a direct calculation from the degree sequence. With $\alpha \ge R$,
\[
  a + R \ \le\ 2\alpha \ =\ \Delta\,\alpha.
\]
If $\Delta \ge 3$, then $\tfrac{\Delta+1}{2} \le \Delta-1$. By
Theorem~\ref{thm:vehicle} and the Caro-Wei bound $\alpha \ge W$,
\[
  a \ \le\ \tfrac{\Delta+1}{2}\,W \ \le\ \tfrac{\Delta+1}{2}\,\alpha
    \ \le\ (\Delta-1)\,\alpha.
\]
Adding $\alpha \ge R$ gives $\Delta\,\alpha = (\Delta-1)\alpha
+ \alpha \ge a + R$.
\end{proof}

\begin{corollary}[Sharpness]\label{cor:sharp}
For every connected graph $G$ with $\Delta(G) \ge 2$, the bound of
Theorem~\ref{thm:main} holds with equality iff $(\Delta-1)\alpha = a$
and $\alpha = R$. The complete graph $K_4$ attains equality, with
$\Delta\,\alpha = a + R = 3$.
\end{corollary}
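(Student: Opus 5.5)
The plan is to read the equality characterization directly off the proof of Theorem~\ref{thm:main}, which writes $\Delta\alpha - (a+R)$ as a sum of nonnegative pieces:
\[
  \Delta\alpha - (a+R) \;=\; \bigl((\Delta-1)\alpha - a\bigr) + (\alpha - R).
\]
The second piece is nonnegative by Favaron's bound $\alpha \ge R$.

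For the first piece I split on $\Delta$ as the earlier proof does. If $\Delta \ge 3$, then $a \le \tfrac{\Delta+1}{2}W \le \tfrac{\Delta+1}{2}\alpha \le (\Delta-1)\alpha$, by Theorem~\ref{thm:vehicle} and the Caro-Wei bound. If $\Delta = 2$, the proof of Theorem~\ref{thm:main} gives $a = \alpha = (\Delta-1)\alpha$, so the first piece is zero.

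In both cases each piece is nonnegative, so their sum vanishes exactly when both vanish. That is, equality $\Delta\alpha = a + R$ holds iff $(\Delta-1)\alpha = a$ and $\alpha = R$. The backward direction is just adding the two equalities, and the forward direction uses the nonnegativity of both pieces. This reasoning only applies under the hypotheses of Theorem~\ref{thm:main}: connected and $\Delta \ge 2$.

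For $K_4$ I compute the invariants directly. Here $\alpha = 1$, $\Delta = 3$, $m = 6$, and the degree sequence is $3,3,3,3$. Since $3+3 = 6 \le 6$ but $9 > 6$, we get $a = 2$.

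For the residue I run Havel-Hakimi:
\[
  (3,3,3,3) \to (2,2,2) \to (1,1) \to (0),
\]
so $R = 1$. Then $\Delta\alpha = 3$ and $a + R = 3$, and indeed $(\Delta-1)\alpha = 2 = a$ and $\alpha = 1 = R$.

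There is no real obstacle here. The only point needing care is confirming, for $\Delta = 2$, that $a \le (\Delta-1)\alpha$ holds (in fact with equality) via the path/cycle calculation quoted in the proof of Theorem~\ref{thm:main}, so that the decomposition into two nonnegative terms is valid in every case.
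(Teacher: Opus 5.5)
Your proof is correct and follows the paper's argument: the paper also writes $\Delta\alpha = (\Delta-1)\alpha + \alpha$, takes $(\Delta-1)\alpha \ge a$ from the proof of Theorem~\ref{thm:main} (which you usefully split into the $\Delta = 2$ and $\Delta \ge 3$ cases) and $\alpha \ge R$ from Favaron, and concludes that equality holds iff both pieces vanish. Your explicit annihilation and Havel--Hakimi computation for $K_4$ checks the values $a = 2$ and $R = 1$, which the paper simply states.
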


\begin{proof}
By the proof of Theorem~\ref{thm:main}, $(\Delta-1)\alpha \ge a$ for every
$\Delta \ge 2$; Favaron's bound gives $\alpha \ge R$. Hence
$\Delta\,\alpha = (\Delta-1)\alpha + \alpha \ge a + R$, with equality iff
both summand inequalities are equalities, that is, $(\Delta-1)\alpha = a$ and
$\alpha = R$. The complete graph $K_4$ has $\alpha = 1$, $a = 2$, $R = 1$, and
$\Delta = 3$, so both equalities hold and $\Delta\,\alpha = a + R = 3$.
\end{proof}

\begin{corollary}[Domination by $R$]\label{cor:dominated}
For every connected graph $G$ with $\Delta(G) \ge 3$,
\[
  \frac{a(G) + R(G)}{\Delta(G)} \ \le\ R(G).
\]
\end{corollary}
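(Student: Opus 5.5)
Multiplying through by $\Delta$, the claim is equivalent to $a \le (\Delta-1)R$. I would mirror the proof of Theorem~\ref{thm:main}, replacing the independence number by the residue throughout. Since the residue dominates the Caro-Wei sum, $R \ge W$ by Theorem~1.1 of~\cite{Favaron1991}, Theorem~\ref{thm:vehicle} gives
\[
  a \ \le\ \frac{\Delta+1}{2}\,W \ \le\ \frac{\Delta+1}{2}\,R .
\]
So it remains to compare $\tfrac{\Delta+1}{2}$ with $\Delta-1$.

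For $\Delta \ge 3$ we have $\Delta+1 \le 2\Delta-2$, that is, $\tfrac{\Delta+1}{2} \le \Delta-1$. Since $R \ge 0$, this yields $a \le (\Delta-1)R$. Adding $R$ to both sides gives $a + R \le \Delta R$, and dividing by $\Delta>0$ gives the stated inequality. Connectedness is not actually used beyond guaranteeing $\Delta \ge 1$, so that Theorem~\ref{thm:vehicle} applies and the division is legitimate.

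I expect no real obstacle. The only point to check is that the chain uses $R \ge W$ rather than $\alpha \ge W$, since $R$ can be strictly smaller than $\alpha$; the domination $R \ge W$ cited at the start of Section~\ref{sec:results} is exactly what is needed. The restriction $\Delta \ge 3$ is where the constant comparison works. For $\Delta=2$ the argument would need $a \le R$, which fails for instance on long paths, so that case is rightly excluded.
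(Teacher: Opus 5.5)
Your proof is correct and matches the paper's argument: Theorem~\ref{thm:vehicle} combined with $W \le R$ gives $a \le \tfrac{\Delta+1}{2}R$, and your comparison $\tfrac{\Delta+1}{2} \le \Delta-1$ is the paper's $\tfrac{\Delta+3}{2} \le \Delta$ rearranged. One small nit: connectedness is not needed even to guarantee $\Delta \ge 1$, since the hypothesis $\Delta \ge 3$ already does that.
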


\begin{proof}
For $\Delta \ge 3$, $\tfrac{\Delta+3}{2} \le \Delta$. Theorem~\ref{thm:vehicle}
and $W \le R$ give $a \le \tfrac{\Delta+1}{2}R$. Hence
\[
  a + R \ \le\ \tfrac{\Delta+1}{2}R + R
        \ =\ \tfrac{\Delta+3}{2}R \ \le\ \Delta R,
\]
and dividing by $\Delta$ gives the inequality.
\end{proof}

\begin{corollary}[Bracketing the independence number]\label{cor:approx}
For every connected graph with maximum degree $\Delta \ge 1$,
\[
  R \ \le\ \alpha \ \le\ a \ \le\ \tfrac{\Delta+1}{2}R.
\]
The outer ratio is sharp, attained by $K_{\Delta+1}$ for every odd $\Delta \ge 1$.
\end{corollary}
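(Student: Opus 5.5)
The chain $R \le \alpha \le a \le \tfrac{\Delta+1}{2}R$ is assembled entirely from results already stated. The first two links are classical: Favaron's residue bound gives $R \le \alpha$, and Pepper's annihilation bound gives $\alpha \le a$. For the third link, a connected graph with $\Delta \ge 1$ has a graphic degree sequence with maximum degree at least one, so Theorem~\ref{thm:vehicle} applies and gives $a \le \tfrac{\Delta+1}{2}W$. Favaron's domination $W \le R$ (Theorem~1.1 of~\cite{Favaron1991}) then upgrades this to $a \le \tfrac{\Delta+1}{2}R$, exactly as in the proof of Corollary~\ref{cor:dominated}.

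For sharpness of the outer ratio, I will compute every invariant for $K_{\Delta+1}$ with $\Delta$ odd. The graph has $n = \Delta+1$ vertices, all of degree $\Delta$, and $m = \Delta(\Delta+1)/2$ edges. The annihilation number is the largest $j$ with $j\Delta \le \Delta(\Delta+1)/2$, namely $j = \lfloor(\Delta+1)/2\rfloor$. This equals $(\Delta+1)/2$ exactly because $\Delta$ is odd, so $n$ is even.

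For the residue, I will run Havel--Hakimi on the sequence of $\Delta+1$ entries equal to $\Delta$. Removing one vertex and decrementing the remaining $\Delta$ entries leaves $\Delta$ entries equal to $\Delta-1$, which is the sequence of $K_\Delta$. Inducting down to $K_1$ gives a single zero, so $R = 1$. Since also $\alpha = 1$, the chain reads $1 \le 1 \le \tfrac{\Delta+1}{2} \le \tfrac{\Delta+1}{2}\cdot 1$. Hence $a/R = \tfrac{\Delta+1}{2}$ and the outer ratio is attained.

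The main point needing care is pinning down the Havel--Hakimi residue convention for complete graphs, and the parity requirement on $\Delta$. For even $\Delta$ the floor gives $a = \Delta/2 < \tfrac{\Delta+1}{2}$, which is why the statement restricts to odd $\Delta$. Both points are short direct checks.
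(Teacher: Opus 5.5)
Your proposal is correct and matches the paper's proof: Favaron's and Pepper's bounds give $R \le \alpha \le a$, Theorem~\ref{thm:vehicle} combined with $W \le R$ gives $a \le \tfrac{\Delta+1}{2}R$, and sharpness comes from evaluating the invariants at $K_{\Delta+1}$. The paper states $R = 1$ and $a = (\Delta+1)/2$ without derivation. Your Havel--Hakimi reduction $K_{\Delta+1} \to K_\Delta \to \cdots \to K_1$ and the floor computation $a = \lfloor (\Delta+1)/2 \rfloor$ supply those derivations.
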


\begin{proof}
The bounds $\alpha \ge R$ and $\alpha \le a$ give $R \le \alpha \le a$;
Theorem~\ref{thm:vehicle} together with $W \le R$ extends this to
$a \le \tfrac{\Delta+1}{2}W \le \tfrac{\Delta+1}{2}R$. For every odd $\Delta \ge 1$, the complete
graph $K_{\Delta+1}$ has $\alpha = 1$, $a = (\Delta+1)/2$, and $R = W = 1$, so
$a = \tfrac{\Delta+1}{2}R$.
\end{proof}

\begin{remark}[Refining the Caro-Wei bracket]
The bracket $R \le \alpha \le a$ refines the Caro-Wei bracket
$W \le \alpha \le \tfrac{\Delta+1}{2}W$~\cite{Boppana2018}, since
$R \ge W$ tightens the lower endpoint and
Theorem~\ref{thm:vehicle} caps the upper endpoint at $\tfrac{\Delta+1}{2}W$.
Both endpoints are computable in polynomial time. The worst-case ratio of
upper to lower endpoint, $(\Delta+1)/2$, is unchanged, attained at
$K_{\Delta+1}$ for odd $\Delta$ (Corollary~\ref{cor:approx}).
\end{remark}

Both theorems were verified by exhaustive computation in exact
arithmetic.\footnote{\url{https://github.com/ChakshuGupta13/lab}}
Theorem~\ref{thm:vehicle} holds for all $3{,}166{,}851$ graphic degree
sequences of order at most $14$ with minimum degree at least $1$, and
Theorem~\ref{thm:main} holds for all $11{,}989{,}762$ connected graphs of
order $3$ through $10$, matching OEIS~A001349.\footnote{\url{https://oeis.org/A001349}}
Graphs were generated with \texttt{geng} from \texttt{nauty}~\cite{McKay2014nauty}.

\section{Conclusion}\label{sec:disc}

Previously the conjecture $\alpha \ge (a+R)/\Delta$ had been verified only for
regular bipartite graphs and cubic K\"onig-Egerv\'ary graphs. This note
resolves it for every connected graph of maximum degree at least two; the
single edge, with $\Delta = 1$, is the unique counter-example. The argument
reduces to the inequality $a \le \tfrac{\Delta+1}{2}W$, capping the ratio
between the annihilation number and the Caro-Wei sum at $(\Delta+1)/2$; the
constant is best possible, attained on regular sequences of even order.
Combined with the classical lower bounds $\alpha \ge W$ and $\alpha \ge R$,
this inequality forces the conjecture.

The conjectured bound $(a+R)/\Delta$ is itself dominated by $R$ for
$\Delta \ge 3$, so the conjecture names a relation among classical
degree-sequence quantities of $\alpha$ rather than tightening any of them.
The bracket $R \le \alpha \le a$ localizes $\alpha$ in polynomial
time within a factor $(\Delta+1)/2$, refining the Caro-Wei bracket. Equality
in the conjecture forces $\alpha = R$ and $(\Delta-1)\alpha = a$; the
complete graph $K_4$ attains both, as do paths and cycles with $R = \alpha$.
A structural description of all equality graphs is left open.

\bibliographystyle{alpha}
\bibliography{references}

\begin{thebibliography}{BHR18}

\bibitem[BHR18]{Boppana2018}
Ravi~B. Boppana, Magn{\'u}s~M. Halld{\'o}rsson, and Dror Rawitz.
\newblock Simple and local independent set approximation.
\newblock In {\em Structural Information and Communication Complexity (SIROCCO
  2018)}, volume 11085 of {\em Lecture Notes in Comput. Sci.}, pages 88--101.
  Springer, 2018.

\bibitem[Car79]{Caro1979}
Yair Caro.
\newblock New results on the independence number.
\newblock Technical report, Tel-Aviv University, 1979.

\bibitem[CP14]{CaroPepper2014}
Yair Caro and Ryan Pepper.
\newblock Degree sequence index strategy.
\newblock {\em Australas. J. Combin.}, 59(1):1--23, 2014.

\bibitem[DBP25]{Davila2025reverie}
Randy Davila, Boris Brimkov, and Ryan Pepper.
\newblock In reverie together: Ten years of mathematical discovery with a
  machine collaborator, 2025.
\newblock Preprint, July 2025.

\bibitem[Faj88]{Fajtlowicz1988graffiti}
Siemion Fajtlowicz.
\newblock On conjectures of {Graffiti}.
\newblock {\em Discrete Math.}, 72(1-3):113--118, 1988.

\bibitem[FMS91]{Favaron1991}
Odile Favaron, Maryvonne Mah{\'e}o, and Jean-Fran{\c{c}}ois Sacl{\'e}.
\newblock On the residue of a graph.
\newblock {\em J. Graph Theory}, 15(1):39--64, 1991.

\bibitem[Hak62]{Hakimi1962}
S.~L. Hakimi.
\newblock On realizability of a set of integers as degrees of the vertices of a
  linear graph. {I}.
\newblock {\em J. Soc. Indust. Appl. Math.}, 10(3):496--506, 1962.

\bibitem[Hav55]{Havel1955}
V{\'a}clav Havel.
\newblock Pozn{\'a}mka o existenci kone{\v c}n{\'y}ch graf{\r u}.
\newblock {\em {\v C}asopis pro p{\v e}stov{\'a}n{\'\i} matematiky},
  80:477--480, 1955.

\bibitem[Kar72]{Karp1972}
Richard~M. Karp.
\newblock Reducibility among combinatorial problems.
\newblock In Raymond~E. Miller and James~W. Thatcher, editors, {\em Complexity
  of Computer Computations}, pages 85--103. Plenum Press, New York, 1972.
\newblock Reprinted in: \emph{50 Years of Integer Programming 1958--2008} (M.
  J{\"u}nger et al., eds.), Springer, 2010, pp.~219--241.

\bibitem[MP14]{McKay2014nauty}
Brendan~D. McKay and Adolfo Piperno.
\newblock Practical graph isomorphism, {II}.
\newblock {\em J. Symbolic Comput.}, 60:94--112, 2014.

\bibitem[Pep09]{Pepper2009}
Ryan Pepper.
\newblock On the annihilation number of a graph.
\newblock In {\em Proceedings of the 15th American Conference on Applied
  Mathematics (AMERICAN-MATH'09)}, pages 217--220. WSEAS Press, 2009.

\bibitem[Wei81]{Wei1981}
Victor~K. Wei.
\newblock A lower bound on the stability number of a simple graph.
\newblock Technical Memorandum 81-11217-9, Bell Laboratories, Murray Hill, NJ,
  1981.

\end{thebibliography}

\end{document}